\newcommand{\Afund}{{\rm A}_0}  
\newcommand{\aBrn}{\widetilde{B}_n} 
\DeclareMathOperator{\Sh}{Sh} 
\newcommand{\Shikn}{Sh_n^k} 
\newcommand{\ZZ}{\mathbb{Z}_{\ge 0}}
\newtheorem{lemma}{Lemma}[section]
\newtheorem{theorem}{Theorem}[section]
\theoremstyle{definition}
\newtheorem{definition}{Definition}[section] 
\newtheorem{remark}{Remark}
\begin{document}
\author{Mikhail Mazin}
\address{Mathematics Department, Kansas State University.
Cardwell Hall, Manhattan, KS 66506}
\email{mmazin@math.ksu.edu}
\title{Multigraph Hyperplane Arrangements and Parking Functions.}
\date{}

\begin{abstract}
Back in the nineties Pak and Stanley introduced a labeling of the regions of a $k$-Shi arrangement by $k$-parking functions and proved its bijectivity. Duval, Klivans, and Martin considered a modification of this construction associated with a graph $G.$ They introduced the $G$-Shi arrangement and a labeling of its regions by $G$-parking functions. They conjectured that their labeling is surjective, i.e. that every $G$-parking function appears as a label of a region of the $G$-Shi arrangement. Later Hopkins and Perkinson proved this conjecture. In particular, this provided a new proof of the bijectivity of Pak-Stanley labeling in the $k=1$ case. We generalize Hopkins-Perkinson's construction to the case of arrangements associated with oriented multigraphs. In particular, our construction provides a simple straightforward proof of the bijectivity of the original Pak-Stanley labeling for arbitrary $k.$


\end{abstract}

\maketitle

\section*{Introduction}

Shi arrangement were introduced by Shi in \cite{Sh86} in his study of Kazhdan-Lusztig polynomials and cells of affine symmetric groups. In \cite{St98} Stanley studied the enumeration of the connected components of the complement to Shi arrangements (regions) with respect to distance from a base region, were the distance between two regions is defined as the number of hyperplanes of the arrangement separating them. The key step in Stanley's construction is a bijection between the set of regions and the set of parking functions, such that the sum of values of a parking function is equal to the distance from the corresponding region to the base region. This bijection is usually refered to as Pak-Stanley labeling. The construction of Pak and Stanley also applies to the extended Shi arrangements, also called $k$-Shi arrangements. The regions of $k$-Shi arrangements are labeled by $k$-parking functions.

The original proof of bijectivity of the Pak-Stanley labeling is very complicated. The objective of this paper is to generalize the labeling to a larger class of arrangements and to prove surjectivity of the generalized labeling (with an appropriately generalized notion of parking functions). In the case of $k$-Shi arrangements, the bijectivity follows then from comparing the number of regions and the number parking functions (both are computed in fairly elementary way). This provides a simple and straightforward proof of the bijectivity of the original Pak-Stanley labeling.

Our constructions were inspired by work of Hopkins and Perkinson (\cite{HP12}), were they prove surjectivity of Pak-Stanley labeling for arrangements associated with a graph. This was originally conjectured by Duval, Klivans, and Martin (\cite{DKM11}). Hopkins-Perkinson's results imply bijectivity of the original Pak-Stanley labeling for $k=1.$ We extend their construction in order to cover the case of general $k\ge 1.$ We also simplify the key step in their proof by replacing it with a transparent geometric argument.

The rest of the paper is organized as follows. We begin by reviewing the original Pak-Stanley labeling in Section \ref{Section: Pak-Stanley}. In Section \ref{Section: G-parking} we review the generalization associated with a graph. Finally, in Section \ref{Section: Multigraph} we present our construction and prove the surjectivity of the labeling.

\section*{Acknowledgements}

The author would like to thank Sam Hopkins for fruitful conversations at the FPSAC meeting in Chicago in Summer $2014,$ which motivated this work.

\section{Pak-Stanley Labeling}\label{Section: Pak-Stanley}
Let $V:=\{x_1+\ldots+x_n=0\}\subset\mathbb R^n.$ We will use the following notation: for any $i,j\in\{1,\ldots,n\}$ and $a\in\mathbb R,$ let 
$$
H_{ij}^a:=\{x_i-x_j=a\}\subset V.
$$ 

\begin{definition}
The {\it affine braid arrangement:} $\aBrn:=\left\{H_{ij}^l:0<j<i\le n,\ l\in\ZZ \right\}$ is the set of all such hyperplanes with integer right hand side. Connected components of the complement to $\aBrn$ are called {\it alcoves}. The {\it fundamental alcove:} is given by
$$
\Afund:=\{ 
\overline{x}\in V \mid
x_1>x_2>\ldots>x_n>x_1-1\}
$$
\end{definition}

Extended Shi arrangements are defined as subarrangements in $\aBrn:$

\begin{definition}
The {\it $k$-Shi arrangement} is given by 
$$
\Shikn:=\left\{H_{ij}^l:0<j<i\le n,\ -k<l\le k \right\}
$$ 
\end{definition}
See Figure \ref{Figure: Shi arrangements} for examples.

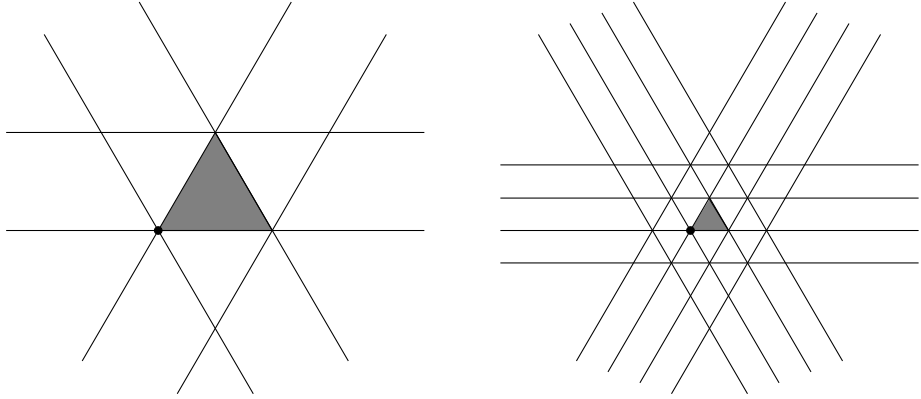
\begin{figure}
\begin{center}
\begin{tikzpicture}[scale=0.5]

\filldraw [fill=gray](4,0)--(7,0)--(5.5,2.6);

\filldraw [fill=black] (4,0) circle [radius=0.1];

\draw (0,0)--(11,0);
\draw (0,2.6)--(11,2.6);

\draw (2,-3.46)--(7.5,6.06);
\draw (4.5,-4.33)--(10,5.2);

\draw (1,5.2)--(6.5,-4.33);
\draw (3.5,6.06)--(9,-3.46);

\filldraw [fill=gray] (18,0)--(19,0)--(18.5,0.83);

\filldraw [fill=black] (18,0) circle [radius=0.1];

\draw (13,0)--(24,0);
\draw (13,0.86)--(24,0.86);

\draw (15.83,-3.75)--(21.33,5.77);
\draw (16.67,-4.04)--(22.17,5.49);

\draw (14.83,5.49)--(20.33,-4.04);
\draw (15.67,5.77)--(21.17,-3.75);

\draw (13,-0.86)--(24,-0.86);
\draw (13,1.74)--(24,1.74);

\draw (15,-3.46)--(20.5,6.06);
\draw (17.5,-4.33)--(23,5.2);

\draw (14,5.2)--(19.5,-4.33);
\draw (16.5,6.06)--(22,-3.46);

\end{tikzpicture}
\end{center}
\caption{Here $n=3.$ $1$-Shi arragnement on the left and $2$-Shi arrangement on the right.}\label{Figure: Shi arrangements}
\end{figure}

\begin{definition}
A function $f:\{1,\ldots,n\}\to\ZZ$ is called a {$k$-parking function} if for any subset $I\subset\{1,\ldots, n\}$ there exist a number $i\in I$ such that $f(i)\le k(n-\sharp I)$
\end{definition}

\begin{remark}
Alternatively, one can define $k$-parking functions as follows. Consider the Young diagram with row lengths given by the values $f(1),\ldots,f(n)$ put in non-increasing order. Then $f$ is a $k$-parking function if and only if the diagram fits under the diagonal of an $n\times kn$ rectangle. 
\end{remark}

Pak-Stanley labeling is a map from the set of regions of a $k$-Shi arrangement to the set of $k$-parking functions. It can be defined inductively using the following rules:
\begin{enumerate}
\item Label the fundamental alcove by the zero function.
\item When crossing a hyperplane $H_{ij}^l$ with $l>0$ in positive direction, increase the value $f(i)$ by $1.$
\item When crossing a hyperplane $H_{ij}^l$ with $l\le 0$ in positive direction, increase the value $f(j)$ by $1.$
\end{enumerate}

Here ``in positive direction'' means getting further away from the fundamental region. More directly, the parking function $\lambda_R$ corresponding to a region $R$ is given by the following formula:
$$
\lambda_R(a)=\sharp\left\{H_{ij}^l\in H_R:l>0, i=a\right\}+\sharp\left\{H_{ij}^l\in H_R:l\le 0, j=a\right\},
$$
where $H_R\subset\Shikn$ is the set of hyperplanes separating $R$ from $\Afund.$

See Figure \ref{Figure: PS labeling} for the Pak-Stanley labeling for $n=3$ and $k=1.$
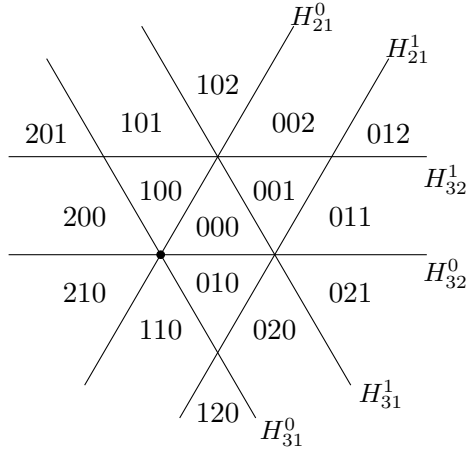
\begin{figure}
\begin{center}
\begin{tikzpicture}[scale=0.5]
\filldraw [fill=black] (4,0) circle [radius=0.1];

\draw (0,0)--(11,0);
\draw (0,2.6)--(11,2.6);

\draw (2,-3.46)--(7.5,6.06);
\draw (4.5,-4.33)--(10,5.2);

\draw (1,5.2)--(6.5,-4.33);
\draw (3.5,6.06)--(9,-3.46);

\draw (8,6.3) node {\footnotesize $H_{21}^0$};
\draw (10.5,5.45) node {\footnotesize $H_{21}^1$};

\draw (11.5,-0.5) node {\footnotesize $H_{32}^0$};
\draw (11.5,2) node {\footnotesize $H_{32}^1$};

\draw (7.2,-4.7) node {\footnotesize $H_{31}^0$};
\draw (9.8,-3.7) node {\footnotesize $H_{31}^1$};

\draw (5.5,0.75) node {\small $000$};
\draw (7,1.7) node {\small $001$};
\draw (4,1.7) node {\small $100$};
\draw (5.5,-0.75) node {\small $010$};
\draw (9,1) node {\small $011$};
\draw (2,1) node {\small $200$};
\draw (3.5,3.5) node {\small $101$};
\draw (7.5,3.5) node {\small $002$};
\draw (7,-2) node {\small $020$};
\draw (4,-2) node {\small $110$};
\draw (10,3.2) node {\small $012$};
\draw (5.5,4.5) node {\small $102$};
\draw (5.5,-4.2) node {\small $120$};
\draw (9,-1) node {\small $021$};
\draw (2,-1) node {\small $210$};
\draw (1,3.2) node {\small $201$};

\end{tikzpicture}
\end{center}
\caption{Pak-Stanley labeling for $n=3,$\ $k=1.$}\label{Figure: PS labeling}
\end{figure}

\begin{theorem}[\cite{St98}]
The Pak-Stanley labelling is a bijection between the set of regions of the $k$-Shi arrangement and the set of $k$-parking functions. 
\end{theorem}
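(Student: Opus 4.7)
The plan is to decouple the bijectivity statement into two independent facts: (a) the Pak-Stanley labeling is surjective onto the set of $k$-parking functions, and (b) the number of regions of $\Shikn$ equals the number of $k$-parking functions. Once both are in hand, bijectivity is automatic from the pigeonhole principle applied to a surjection between finite sets of equal cardinality.

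First, I would verify that $\lambda_R$ really is a $k$-parking function for every region $R$. This requires checking the defining inequality $f(i)\le k(n-\sharp I)$ for some $i\in I$; geometrically, one can localize to the walls $H_{ij}^l$ involving indices in $I$ and count crossings, using that the fundamental alcove $\Afund$ is on the ``small'' side of every $H_{ij}^l$ with $0<j<i$ and $0<l\le k$. Next I would recall (or derive by the finite-field method of Athanasiadis, or by the standard deletion-restriction recursion) that the number of regions of the $k$-Shi arrangement is $(kn+1)^{n-1}$, and that the number of $k$-parking functions of length $n$ is also $(kn+1)^{n-1}$. The latter is a short cycle-lemma argument on sequences in $\{0,1,\ldots,kn\}^n$ summing appropriately, or alternatively a direct bijection with parking sequences on $kn+1$ spots.

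The heart of the argument, and the main obstacle, will be surjectivity. My strategy follows the route advertised in the introduction: embed the $k$-Shi case into the multigraph framework of Section~\ref{Section: Multigraph}, where each hyperplane is produced by a directed edge of an auxiliary multigraph. Given a target $k$-parking function $f$, I would construct a candidate region by iteratively flipping across walls in a controlled order: starting at $\Afund$, traverse hyperplanes in such a way that each crossing increases exactly one coordinate of the current label to match $f$. The $k$-parking condition is precisely what guarantees that at each stage some admissible wall is available, so the process terminates in a region with label $f$. The delicate point, where the multigraph geometric argument replaces the combinatorial key lemma of Hopkins-Perkinson, is checking that the alcove reached after a sequence of crossings is genuinely a single region of $\Shikn$ and that the process does not get stuck or cycle.

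Finally, combining the three ingredients: the labeling $R\mapsto\lambda_R$ is a well-defined map from regions to $k$-parking functions, it is surjective, and the source and target have the same cardinality $(kn+1)^{n-1}$, hence it is a bijection. I expect the counting half to be essentially a citation and a short verification, while the surjectivity half will require the full machinery developed later in the paper, and thus the theorem as stated here is really a corollary of the Section~\ref{Section: Multigraph} construction applied to the specific multigraph encoding the $k$-Shi arrangement.
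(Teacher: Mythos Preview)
Your proposal is correct and follows essentially the same route as the paper: reduce to the multigraph framework of Section~\ref{Section: Multigraph} (the $k$-Shi arrangement becomes, after shifting the origin into $\Afund$, the arrangement attached to the complete graph with every edge of multiplicity $k$ in each direction), invoke the surjectivity Theorem~\ref{Theorem: surjectivity} proved there via the iterative wall-crossing/ray argument, and then conclude bijectivity from the known equality $(kn+1)^{n-1}$ of the two counts. The well-definedness check you outline is exactly the content of the lemma preceding Theorem~\ref{Theorem: surjectivity}, specialized to this multigraph.
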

The original proof is complicated, especially for $k>1.$

\section{$G$-parking Functions and $G$-Shi Arrangements}\label{Section: G-parking}

Let $G$ be a graph with the vertex set $\{1,\ldots,n\}.$ The $G$-Shi arrangement is the subarragement in $1$-Shi arrangement given by 
$$
\left\{H_{ij}^l\in\Sh_n^1:(i,j)\in E_G\right\}.
$$

The Pak-Stanley labeling is defined in the same way as before. See Figure \ref{Figure: GPS labeling} for an example. Note that the labeling is not injective: for example, on Figure \ref{Figure: GPS labeling} the labels in bold are the same. It is not hard to show that the labels satisfy the following condition:

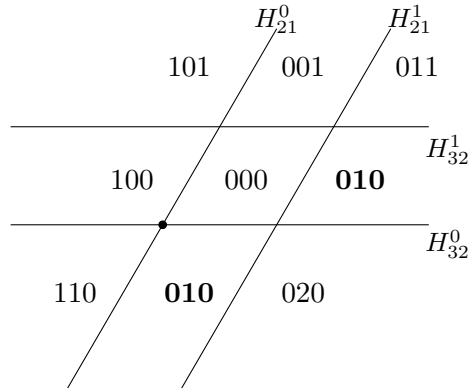
\begin{figure}
\begin{center}
\begin{tikzpicture}[scale=0.5]


\filldraw [fill=black] (4,0) circle [radius=0.1];

\draw (0,0)--(11,0);
\draw (0,2.6)--(11,2.6);

\draw (1.5,-4.33)--(7,5.2);
\draw (4.5,-4.33)--(10,5.2);


\draw (7,5.45) node {\footnotesize $H_{21}^0$};
\draw (10.5,5.45) node {\footnotesize $H_{21}^1$};

\draw (11.5,-0.5) node {\footnotesize $H_{32}^0$};
\draw (11.5,2) node {\footnotesize $H_{32}^1$};

\draw (6.2,1.2) node {\small $000$};
\draw (9.2,1.2) node {\small $\bf 010$};
\draw (3.2,1.2) node {\small $100$};

\draw (7.7,4.2) node {\small $001$};
\draw (10.7,4.2) node {\small $011$};
\draw (4.7,4.2) node {\small $101$};

\draw (4.7,-1.8) node {\small $\bf 010$};
\draw (7.7,-1.8) node {\small $020$};
\draw (1.7,-1.8) node {\small $110$};

\end{tikzpicture}
\end{center}
\caption{Pak-Stanley labeling for the graph $G:=\{1-2-3\}.$}\label{Figure: GPS labeling}
\end{figure}

\begin{definition}
A function $f:\{1,\ldots,n\}\to\ZZ$ is called a {$G$-parking function} if for any non-empty subset $I\subset\{1,\ldots,n\}$ there exists a vertex $i\in I$ such that the number of edges $(i,j)\in E_G$ such that $j\notin I$ is greater than or equal to $f(i).$
\end{definition}

\begin{theorem}[\cite{HP12}]
The labeling is a surjective map from the set of regions of a $G$-Shi arrangement to the set of $G$-parking functions.
\end{theorem}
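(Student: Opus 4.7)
The plan is to establish the claim in two parts: first, that every label $\lambda_R$ is a $G$-parking function; and second, that the labeling is surjective onto this set. For the first part, given a region $R$, a generic interior point $\overline{x}\in R$, and a non-empty $I\subset\{1,\ldots,n\}$, I would choose an extremal index $i\in I$ (for instance one minimizing or maximizing $x_i$) so that the comparisons $x_i - x_b$ for $b\in I\setminus\{i\}$ constrain which hyperplanes $H_{ab}^l\in H_R$ can contribute to $\lambda_R(i)$. A direct case analysis (splitting by whether $a$ lies above or below $i$ and whether $l=0$ or $l=1$) should show that all contributions come from edges $(i,b)\in E_G$ with $b\notin I$, giving $\lambda_R(i)\le\#\{b\notin I:(i,b)\in E_G\}$, as required.

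For the surjectivity part, I would induct on $|f|:=\sum_i f(i)$. The base case $f\equiv 0$ is witnessed by the region containing $\Afund$. For the inductive step, a key preliminary observation is that the $G$-parking condition is monotone under pointwise decrease: if $f$ is a $G$-parking function and $f(i)>0$, then $f-e_i$ is again a $G$-parking function, since weakening $f(i)$ only helps to satisfy the inequality $f(i)\le\#\{b\notin I:(i,b)\in E_G\}$. By the inductive hypothesis, for any such $i$ the function $f-e_i$ is the label of some region $R'$. The task is then to pick a good $i$ with $f(i)>0$ and to find a region $R'$ labeled $f-e_i$ that is adjacent to a region $R$ across a hyperplane $H$---necessarily of the form $H_{ij}^1$ or $H_{ji}^0$ for some $G$-neighbor $j$ of $i$, with $R$ on the side farther from $\Afund$---so that crossing $H$ increments the $i$-th coordinate and yields $\lambda_R=f$.

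The main obstacle, precisely the step the author identifies as replacing Hopkins--Perkinson's key argument with a \emph{transparent geometric} one, is exhibiting such a pair $(R',H)$. A naive choice of $i$ may fail because not every region labeled $f-e_i$ need have a wall of the required form, and since the labeling is not injective one must choose the right preimage. The natural fix is a direct geometric construction: given the $G$-parking function $f$, define an explicit point $\overline{x}_f\in V$ whose coordinates depend on $f$ and an auxiliary ordering of the vertices, and verify, by inspecting the signs of $x_a - x_b - l$ for each $H_{ab}^l$ in the $G$-Shi arrangement, that the region containing $\overline{x}_f$ has label exactly $f$. The $G$-parking inequalities on $f$ are what make this construction consistent; the analogous point built from a function violating some $G$-parking inequality would lie in a region whose label differs in a predictable way, which is where I expect the subtlety to lie.
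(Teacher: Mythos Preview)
Your outline has the right two-part shape, but neither part is actually carried out, and in each case the specific mechanism you propose is not the one the paper uses.

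For the first part, the paper (working in the more general multigraph setting of Section~3 and specializing) does \emph{not} pick an extremal coordinate. Instead it observes that the sub-multigraph $G_R$ recording which hyperplanes separate $R$ from the base region can contain no directed cycle: summing the defining inequalities around a cycle yields $0>\sum a_i$ with all $a_i>0$. Acyclicity then guarantees, for every nonempty $I$, a vertex $i\in I$ with no $G_R$-edge into $I$, and that vertex witnesses the parking inequality. Your ``minimize or maximize $x_i$'' heuristic does not do this job in the $G$-Shi setting: maximizing $x_i$ kills the $H^{0}$ contributions from $I$ but not the $H^{1}$ ones, and minimizing does the reverse. What is actually needed is a source of the acyclic relation, which is not in general a coordinate extremum.

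For surjectivity, your induction on $|f|$ is morally the same as the paper's step-by-step walk from $R_0$, and you correctly isolate the hard point: given the current region, why must there exist a wall whose crossing increments the label only at some index where it is still below $f$? But your proposed resolution---abandon the induction and instead write down an explicit point $\overline{x}_f$---is left entirely unspecified, so there is nothing to evaluate. The paper's ``transparent geometric'' step is different and concrete: at a region $R$, set $I=\{i:f(i)>\lambda_R(i)\}$, $J=\{1,\dots,n\}\setminus I$, and move along the ray
\[
x_i(t)=r_i+\tfrac{t}{\#I}\ (i\in I),\qquad x_j(t)=r_j-\tfrac{t}{\#J}\ (j\in J),\qquad t\ge 0.
\]
Along this ray the differences $x_p-x_q$ with $p,q$ on the same side of the partition are constant, so the only walls one can meet are of the form $H_{ij}$ with $i\in I$, $j\in J$, crossed in the direction away from the base region; this is exactly the wall you were looking for. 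If the ray escapes to infinity, then every such hyperplane already separates $R$ from $R_0$, and one checks that $I$ then violates the $G$-parking condition for $f$, a contradiction. This ray argument is the missing idea in your plan; once you have it, the induction on $|f|$ (equivalently, the iterative walk) goes through without needing any global formula for a point $\overline{x}_f$.
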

This was first conjectured by Duval, Klivans, and Martin (\cite{DKM11}) and then proved by Hopkins and Perkinson. Note that this provides a new proof of bijectivity of the original Pak-Stanley labeling for $k=1.$

\begin{remark}
In fact, Hopkins and Perkinson proved more than that. It turns out that one can move the hyperplanes of the arrangement by changing the constant on the right hand side. The labeling will remain surjective as long as the fundamental region does not collapse.
\end{remark}

\section{Multigraphical Generalizations}\label{Section: Multigraph}

Let ${\mathcal A}$ be any finite arrangement of hyperplanes of the form $H_{ij}^a=\{x_i-x_j=a\}$ with $a>0.$
The region $R_0$ containing the origin is called {\it fundamental region.} Let $m_{ij}$ denote the number of hyperplanes of the form $H_{ij}^a$ in ${\mathcal A}.$ Note that we have $m_{ij}+m_{ji}$ parallel hyperplanes, $m_{ij}$ on one side of the origin and $m_{ji}$ on the other. The labeling defined in the same way as before. In fact, it is more symmetric in our notations. Let $R$ be a region, then $\lambda_R(i)$ equals to the number of hyperplanes $H_{ij}^l$ separating $R$ from $R_0.$ See Figure \ref{Figure: MGPS labeling} for an example.

\begin{figure}
\begin{center}
\begin{tikzpicture}[scale=0.5]
\filldraw [fill=black] (5,0.3) circle [radius=0.1];

\draw (0,0)--(11,0);

\draw (2,-3.46)--(7.5,6.06);

\draw (3.5,6.06)--(9,-3.46);

\draw (8,6.3) node {\footnotesize $H_{12}^a$};

\draw (11.5,-0.5) node {\footnotesize $H_{23}^b$};

\draw (9.8,-3.7) node {\footnotesize $H_{31}^c$};

\draw (5.5,0.75) node {\small $000$};
\draw (7,1.7) node {\small $001$};
\draw (4,1.7) node {\small $100$};
\draw (5.5,-0.75) node {\small $010$};
\draw (5.5,4.5) node {\small $101$};
\draw (9,-1) node {\small $011$};
\draw (2,-1) node {\small $110$};

\end{tikzpicture}
\end{center}
\caption{Here $a,b,c>0,$ and so $m_{12}=m_{23}=m_{31}=1,$\ $m_{32}=m_{21}=m_{13}=0,$ and the dot corresponds to the origin.}\label{Figure: MGPS labeling}
\end{figure}
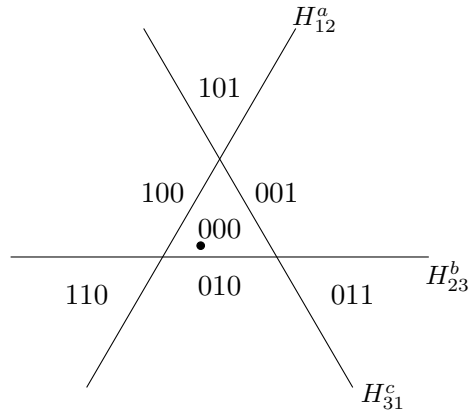

The oriented multigraph $G=G_{\mathcal A}$ is defined as follows. The vertex set is $V_G:=\{1,\ldots, n\},$ and the edge $i\rightarrow j$ has multiplicity $m_{ij}.$ Note that we allow arrows $(i\rightarrow j)$ and $(i\leftarrow j)$ with arbitrary non-negative multiplicities at the same time. In the example on Figure \ref{Figure: MGPS labeling}, one gets a circle graph (Figure \ref{Figure: cycle graph}).

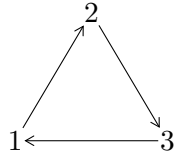
\begin{figure}
\begin{center}
\begin{tikzpicture}
\draw (0,0) node {\small $1$};
\draw (1,1.7) node {\small $2$};
\draw (2,0) node {\small $3$};

\draw [arrows={-angle 60}] (0.1,0.17)--(0.9,1.53);
\draw [arrows={-angle 60}] (1.1,1.53)--(1.9,0.17);
\draw [arrows={-angle 60}] (1.88,0)--(0.12,0);
\end{tikzpicture}
\end{center}
\caption{The graph corresponding to the arrangement in Figure \ref{Figure: MGPS labeling}.}\label{Figure: cycle graph}
\end{figure}


\begin{definition}
A function $f:\{1,\ldots,n\}\to\ZZ$ is called a {$G$-parking function} if for any non-empty subset $I\subset\{1,\ldots,n\}$ there exists a vertex $i\in I$ such that the number of edges $(i\rightarrow j)\in E_G,$ count with multiplicity, such that $j\notin I$ is greater than or equal to $f(i).$
\end{definition}

\begin{remark}
Note that at least one value of any $G$-parking function $f$ has to be zero. Indeed, take $I=\{1,\ldots,n\}.$ Then, according to the definition, there exists $i\in\{1,\ldots,n\},$ such that $f(i)=0.$ 
\end{remark}

\begin{lemma}
Let $R$ be a region of an arrangement ${\mathcal A}$ with the corresponding multigraph $G=G_{\mathcal A}.$ Then the label $\lambda_R$ is a $G$-parking function.
\end{lemma}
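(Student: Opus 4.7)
The plan is a direct geometric argument that uses the positivity hypothesis $a > 0$ on every hyperplane of $\mathcal{A}$. The key observation is that, since $R_0$ contains the origin (where $x_i - x_j = 0 < a$), a hyperplane $H_{ij}^a = \{x_i - x_j = a\}$ separates a region $R$ from $R_0$ precisely when every point $p \in R$ satisfies $p_i - p_j > a$; in particular this forces $p_i > p_j$. Also, the number of such hyperplanes $H_{ij}^a \in \mathcal{A}$ separating $R$ from $R_0$ is at most $m_{ij}$ by definition.

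Given a non-empty subset $I \subset \{1, \ldots, n\}$, I would pick any point $p \in R$ and let $i_0 \in I$ achieve $\min_{i \in I} p_i$. For every $j \in I$ one then has $p_{i_0} - p_j \le 0$, so no hyperplane $H_{i_0 j}^a$ with $j \in I$ (all of which have $a > 0$) can separate $R$ from $R_0$. Consequently,
$$\lambda_R(i_0) \;=\; \sum_{j \notin I} \#\bigl\{a > 0 \,:\, H_{i_0 j}^a \in \mathcal{A} \text{ separates } R \text{ from } R_0\bigr\} \;\le\; \sum_{j \notin I} m_{i_0 j},$$
and the right-hand side is exactly the number of edges $(i_0 \to j) \in E_G$ with $j \notin I$, counted with multiplicity. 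This is precisely the defining inequality for a $G$-parking function at the subset $I$, with $i_0$ serving as the distinguished vertex.

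I do not anticipate any serious obstacle: the argument rests on a single geometric observation (crossing $H_{ij}^a$ with $a > 0$ moving away from the origin requires $x_i - x_j$ to grow past the strictly positive threshold $a$) combined with the trivial fact that the minimum of $\{p_i : i \in I\}$ makes all differences $p_{i_0} - p_j$ non-positive on $I$. The proof is uniform in the multigraph $G_{\mathcal{A}}$ and makes no use of the specific combinatorics of its edges beyond the relation $m_{ij} = \#\{a > 0 : H_{ij}^a \in \mathcal{A}\}$.
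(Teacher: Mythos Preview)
Your proof is correct and close in spirit to the paper's, though packaged a bit more directly. The paper first establishes that the sub-multigraph $G_R \subset G$ recording the hyperplanes separating $R$ from $R_0$ is acyclic (by summing the inequalities $r_{i_k} - r_{i_{k+1}} > a_k > 0$ around a hypothetical cycle to reach $0 > \sum a_k > 0$), and then uses that a finite acyclic digraph restricted to any nonempty $I$ has a vertex with no outgoing edge inside $I$. You bypass the acyclicity step by directly exhibiting that vertex as the minimizer $i_0$ of $\{p_i : i \in I\}$; this works precisely because a separating hyperplane $H_{ij}^a$ forces $p_i > p_j$, so edges of $G_R$ point from larger to smaller coordinate values. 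Your version is marginally more elementary, while the paper's formulation isolates the acyclicity of $G_R$ as a standalone structural observation that could be reused elsewhere.
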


\begin{proof}
Consider the subarrangement ${\mathcal A}_R\subset {\mathcal A}$ consisting of hyperplanes of ${\mathcal A}$ that separate the region $R$ from the origin. Let $G_R=G_{{\mathcal A}_R}\subset G_{\mathcal A}$ be the corresponding sub-multigraph. The key observation is that $G_R$ cannot contain oriented cycles. Indeed, suppose that there is a cycle $i_1\rightarrow i_2\rightarrow\ldots\rightarrow i_k\rightarrow i_1.$ Then it follows that there exist hyperplanes $H_{i_1i_2}^{a_1},\ldots,H_{i_ki_1}^{a_k}\in {\mathcal A}$ separating region $R$ from the origin. Let ${\bf r}=(r_1,\ldots,r_n)\in R$ be a point. One gets
$$
r_{i_1}-r_{i_2}>a_1,
$$
$$
\vdots
$$
$$
r_{i_k}-r_{i_1}>a_k.
$$

Summing the inequalities up, one gets
$$
0>a_1+\ldots+a_k,
$$
which is impossible, because $a_i>0$ for all $i$ by construction. Contradiction.

Let us verify the $G$-parking function condition for $\lambda_R.$ Let $I\subset\{1,\ldots n\}$ be any non-empty subset. It follows that there exists $i\in I$ such that if $i\rightarrow j$ is an edge of $G_R,$ then $j\notin I.$ Indeed, otherwise $I$ contains an oriented cycle of $G_R.$ Note that $\lambda_R(i)$ is equal to the outgoing index of $i$ in $G_R,$ and all the outgoing edges from $i$ leave the subset $I.$ Therefore, $\lambda_R$ is a $G$-parking function.  
\end{proof}

\begin{theorem}\label{Theorem: surjectivity}
The labeling is surjective.
\end{theorem}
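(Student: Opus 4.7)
My plan is to prove Theorem~\ref{Theorem: surjectivity} by a direct geometric construction: given a $G$-parking function $f$, produce a point ${\bf r}\in V$ whose region has label $f$. The key input is the Dhar-style burning sequence extracted from the $G$-parking condition. Iteratively applying the condition to a decreasing chain $\{1,\ldots,n\}=I_0\supsetneq I_1\supsetneq\cdots\supsetneq I_{n-1}=\emptyset$ (where at stage $k$ I pick $v_k\in I_{k-1}$ witnessing the $G$-parking condition for $I_{k-1}$), I obtain an ordering $v_1,\ldots,v_n$ of the vertices satisfying, for each $k$,
\[
f(v_k)\ \leq\ m_k\ :=\ \sum_{l<k}m_{v_k v_l}.
\]

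The construction is to place $r_{v_1}<r_{v_2}<\cdots<r_{v_n}$, choosing each coordinate in turn so that exactly $f(v_k)$ of the hyperplanes $H_{v_k v_l}^a\in\mathcal A$ with $l<k$ satisfy $r_{v_k}-r_{v_l}>a$. The monotone ordering $r_{v_1}<\cdots<r_{v_n}$ immediately guarantees that no hyperplane $H_{v_l v_k}^{a'}$ (with $l<k$) is ever crossed, since then $r_{v_l}-r_{v_k}<0<a'$; this is the same cycle-freeness observation used in the preceding lemma, now exploited constructively. Consequently, the region $R$ containing such an ${\bf r}$ will satisfy $\lambda_R(v_k)=f(v_k)$ for every $k$, and a final translation by a multiple of $(1,\ldots,1)$ (which does not affect any difference $r_i-r_j$) places ${\bf r}$ inside $V$.

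At stage $k$, the admissible positions for $r_{v_k}$ giving the correct count of crossings form an open interval $(\tau_{f(v_k)},\tau_{f(v_k)+1})$, where $\tau_1<\tau_2<\cdots<\tau_{m_k}$ are the sorted thresholds $\{r_{v_l}+a:\ l<k,\ H_{v_k v_l}^a\in\mathcal A\}$ (with the conventions $\tau_0=-\infty$, $\tau_{m_k+1}=+\infty$). The burning inequality $f(v_k)\leq m_k$ ensures this interval is non-degenerate.

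The main obstacle is reconciling this interval with the monotonicity constraint $r_{v_k}>r_{v_{k-1}}$. If the previously placed coordinates are sufficiently clustered, in the precise sense that $r_{v_{k-1}}-r_{v_l}$ is smaller than the minimum positive constant $a$ appearing among the hyperplanes $H_{v_k v_l}^a$, then every threshold $\tau$ exceeds $r_{v_{k-1}}$ and the target interval automatically lies in $(r_{v_{k-1}},\infty)$. Enforcing this clustering while simultaneously hitting the target interval at steps where $f(v_k)>0$ requires an inductive choice of gaps, possibly combined with a uniform rescaling of $\mathcal A$. This careful bookkeeping is where I expect the ``transparent geometric argument'' promised in the introduction to do its real work, replacing the delicate combinatorial reduction of Hopkins--Perkinson with this concrete inductive placement of the coordinates of ${\bf r}$.
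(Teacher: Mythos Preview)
Your plan is not what the paper does, and the step you flag as ``careful bookkeeping'' is a genuine gap rather than a detail. The tension between monotonicity and the target interval cannot be resolved by clustering: as soon as some $f(v_k)>0$ you are forced to take $r_{v_k}>\tau_1\ge r_{v_1}+a_{\min}$, so the spread of the already-placed coordinates exceeds $a_{\min}$ and your clustering hypothesis fails at every subsequent step. Concretely, let $n=3$, $\mathcal A=\{H_{21}^{10},\,H_{31}^{1}\}$, and $f=(0,1,0)$; this is a $G_{\mathcal A}$-parking function, and with the burning order $1,2,3$ your recipe forces first $r_2>r_1+10$ and then $r_3<r_1+1$, contradicting $r_3>r_2$. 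A different burning order happens to succeed in this example, but you give no mechanism for selecting one, and a uniform rescaling of $\mathcal A$ preserves the ratio $10:1$ and hence the obstruction.

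The paper's ``transparent geometric argument'' is of a completely different nature: it never builds a point coordinate by coordinate, and no burning sequence appears. Instead it walks from $R_0$ toward the target region one hyperplane at a time. At the current region $R$ set $I=\{i:\lambda_R(i)<f(i)\}$ and $J$ its complement, and from a point of $R$ move along the ray that uniformly increases the $I$-coordinates and uniformly decreases the $J$-coordinates. Either this ray meets a wall $H_{ij}^a$ with $i\in I$, $j\in J$ (cross it, which increments $\lambda(i)$ by one, and repeat), or it escapes to infinity, in which case every hyperplane $H_{ij}^a$ with $i\in I$, $j\in J$ already separates $R$ from the origin and one checks directly that the set $I$ then violates the $G$-parking condition for $f$. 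There is no monotone placement and no rescaling to manage.
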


\begin{remark}
Note that arrangements with the same graph might have different combinatorics and even different number of regions. However, the labeling is always surjective. See Figure \ref{Figure: two arrangements same graph} for an example.
\end{remark}

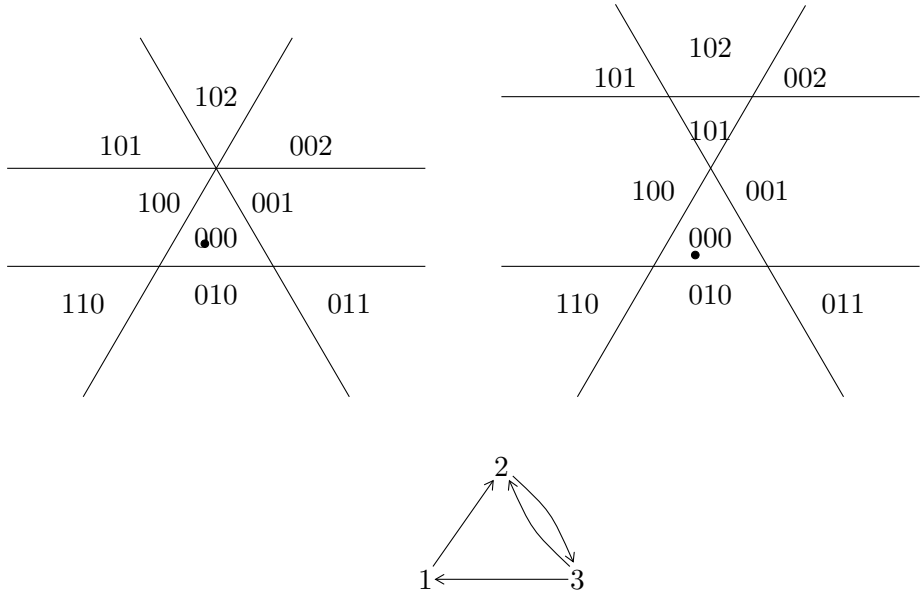
\begin{figure}  
\begin{center}
\begin{tikzpicture}[scale=0.5]


\filldraw [fill=black] (5.2,0.6) circle [radius=0.1];

\draw (0,0)--(11,0);
\draw (0,2.6)--(11,2.6);

\draw (2,-3.46)--(7.5,6.06);

\draw (3.5,6.06)--(9,-3.46);

\draw (5.5,0.75) node {\small $000$};
\draw (7,1.7) node {\small $001$};
\draw (4,1.7) node {\small $100$};
\draw (5.5,-0.75) node {\small $010$};
\draw (8,3.2) node {\small $002$};
\draw (5.5,4.5) node {\small $102$};
\draw (9,-1) node {\small $011$};
\draw (2,-1) node {\small $110$};
\draw (3,3.2) node {\small $101$};

\filldraw [fill=black] (18.1,0.3) circle [radius=0.1];

\draw (13,0)--(24,0);
\draw (13,4.5)--(24,4.5);

\draw (15,-3.46)--(21,6.92);

\draw (16,6.95)--(22,-3.46);

\draw (18.5,0.75) node {\small $000$};
\draw (20,2) node {\small $001$};
\draw (17,2) node {\small $100$};
\draw (18.5,-0.75) node {\small $010$};
\draw (21,5) node {\small $002$};
\draw (18.5,5.8) node {\small $102$};
\draw (18.5,3.6) node {\small $101$};
\draw (22,-1) node {\small $011$};
\draw (15,-1) node {\small $110$};
\draw (16,5) node {\small $101$};

\draw (11,-8.3) node {\small $1$};
\draw (13,-5.3) node {\small $2$};
\draw (15,-8.3) node {\small $3$};

\draw [arrows={-angle 60}] (13.3,-5.56) .. controls (14.3,-6.5) .. (14.9,-7.86);
\draw [arrows={-angle 60}] (14.8,-7.96) .. controls (13.8,-7) .. (13.2,-5.66);
\draw [arrows={-angle 60}] (11.2,-7.96)--(12.8,-5.66);
\draw [arrows={-angle 60}] (14.76,-8.3)--(11.24,-8.3);

\end{tikzpicture}
\end{center}
\caption{These two arrangements correspond to the same graph. Note that the labeling is surjective in both cases.}\label{Figure: two arrangements same graph}
\end{figure}

\begin{proof}[Proof of Theorem \ref{Theorem: surjectivity}]
Let $f:\{1,\ldots n\}\to \mathbb Z_{\ge 0}$ be a $G$-parking function. We want to find a region of ${\mathcal A}$ such that the corresponding label is equal to $f.$ We start from $R_0$ and cross exactly one hypersurface on each step. We proceed as follows. Suppose that we got to a region $R.$ Split the set of indexes in two subset 
$$
\{1,\ldots,n\}=I\sqcup J,
$$ 
where $I:=\{i:f(i)>\lambda_R(i)\}.$ On our next step we cross a hyperplane of the form $H_{ij}^a$ with $i\in I$ and $j\in J$ in positive direction, i.e. getting further away from the origin. As long as we can proceed, the following conditions will be satisfied:

\begin{enumerate}
\item For all $j\in J$ one has $f(j)=\lambda_R(j).$
\item The subset $I=\{i:f(i)>\lambda_R(i)\}$ is not increasing along the way. In particular, the subset $J$ is never empty (it was not empty in the beginning, because at least one value of $f$ is zero).
\item No hyperplane of the form $H_{pq}^a$ with $p,q\in I$ separates the region $R$ from $R_0$ at any step.
\end{enumerate}

Indeed, according to the algorithm, the values of the labeling can only increase along the way, and they cannot exceed $f(i).$ Condition $(3)$ then follows from condition $(2).$ All we need to prove now is that given these conditions, we can proceed until $I=\emptyset$ and $f=\lambda_R.$ Indeed, let us start from a point $(r_1,\ldots,r_n)\in R$ and move along the ray 

$$
x_i=r_i+\frac{t}{\sharp I},\ i\in I, 
$$
$$
x_j=r_j-\frac{t}{\sharp J},\ j\in J,
$$
$t\ge 0.$ If we hit a wall of $R,$ it has to be a hyperplane of the form $H_{ij}^a$ with $i\in I$ and $j\in J.$ If we never hit any walls, that means that all hyperplanes of that form are already between $R$ and $R_0.$ In which case the subset $I\subset\{1,\ldots,n\}$ violates the $G$-parking function condition. Indeed, since $f$ is a $G$-parking function, there should exist a vertex $i\in I$ such that 
$$
N_R(i):=\sharp\{(i\rightarrow j: j\in J)\}\ge f(i).
$$ 
However, there are no edges between vertices inside $I,$ so we get $\lambda_R(i)=N_R(i)\ge f(i),$ which contradicts the definition of the set $I.$  
\end{proof}

Note that this result provides a simple proof of the bijectivity of the original Pak-Stanley labeling for general $k.$ Indeed, given a $k$-Shi arrangement, one can change the coordinates by shifting the origin to the interior of the fundamental region. The resulting arrangement will fit into the framework of this chapter. The corresponding multigraph is the full graph with every edge having multiplicity $k$ in each direction. The $G$-parking function condition specializes to the $k$-parking function condition.

Finally, the number of regions of a $k$-Shi arrangement and the number of $k$-parking functions are both known to be equal to $(kn+1)^{n-1}.$ Therefore, in this special case, the labeling is not only surjective but, in fact, bijective.

\end{document}